\DeclareSymbolFontAlphabet{\mathcal}{symbols}
\definecolor{trama}{gray}{.875}
\title{Lie models for nilpotent spaces}
\date{\today}
\author[Y. F\'elix]{Yves F\'elix}
\address{Institut de Math\'ematiques et Physique\\
         Universit\'e Catholique de Louvain-la-Neuve\\
         Louvain-la-Neuve\\
         Belgique}
\email{Yves.felix@uclouvain.be}
\author[J. M. Moreno-Fern\'{a}ndez]{Jos\'{e} M. Moreno-Fern\'{a}ndez}
\address{Departamento de Algebra, Geometr\'{\i}a y Topolog\'{\i}a, Universidad
de M\'alaga, Ap. 59, 29080 M\'alaga, Spain}
\email{josemoreno@uma.es}
\author[D. Tanr\'e]{Daniel Tanr\'e}
\address{D\'epartement de Math{\'e}matiques\\
         UMR 8524 \\
         Universit\'e de Lille~1\\
         59655 Villeneuve d'Ascq Cedex\\
         France}
\email{Daniel.Tanre@univ-lille1.fr}
\thanks{The authors have been partially supported by the MINECO grants MTM2013-41768-P and MTM2016-78647-P. The second author has also been partially supported by the Junta de Andaluc{\'\i}a grant FQM-213.
}
\subjclass[2010]{55P62 ; 55U15 ; 16E45}
\keywords{Rational Homotopy. Differential graded Lie algebra. Completion. Lawrence-Sullivan models.}
\theoremstyle{plain}
\newtheorem{proposition}{Proposition} 
\newtheorem{theoremb}{Theorem}
\newtheorem{lemma}[proposition]{Lemma}
\theoremstyle{definition}
\newtheorem{definition}[proposition]{Definition}
\theoremstyle{remark}
\newtheorem{remark}[proposition]{Remark}
\newcommand{\thmref}[1]{Theorem~\ref{#1}}
\newcommand{\remref}[1]{Remark~\ref{#1}}
\def\cC{{\mathcal C}}
\def\cL{{\mathcal L}}
\def\L{\mathbb{L}}
\def\Q{\mathbb{Q}}
\def\hL{{\widehat{\mathbb L}}}
\def\ad{{\rm ad}}
\def\ov{\overline}
    \newcommand{\lasu}{{\mathfrak{L}}}
    \newcommand{\fracd}{{\mathfrak{D}}}
\newcommand{\catcdga}{\operatorname{{\bf CDGA}}}
\newcommand{\CDGC}{\operatorname{{\bf CDGC}}}
\newcommand{\catdglco}{\operatorname{{\bf DGLC}}}
\newcommand{\catdgl}{\operatorname{{\bf DGL}}}
  \newcommand{\libre}{\mathbb L}
\def\Lc{\mathbb{L}^{\!c}}
\begin{document}

\begin{abstract}
Let $(L,d)$ be a differential graded Lie algebra, where $L=\libre(V)$ is free as graded Lie algebra and $V=V_{\geq 0}$ is a finite type graded vector space. We prove that the injection of $(L,d)$ into its completion $(\widehat{L},d)$ is a quasi-isomorphism if and only if $H(L,d)$ is a finite type pronilpotent graded Lie algebra. As a consequence, we obtain an equivalence between graded Lie models for nilpotent spaces in rational homotopy theory.  
\end{abstract}

\maketitle


In this paper, all graded vector spaces and other algebraic structures are defined over the field $\mathbb Q$. A graded vector space is said to be of finite type if it is finite dimensional in each degree. We will denote by $\mathbb L (V)$ the free graded Lie algebra on the graded vector space $V$. 

Let $(L,d)$  be a differential graded Lie algebra (dgl hereafter) where $L =\mathbb L (V),$ $V= V_{\geq 0}$ is of finite type, 
and the differential $d$ decreases the degree by $1$. 
The \emph{completion} of $(L,d)$ is the dgl $(\widehat{\mathbb L}(V),d)$ where 
$\widehat{\mathbb L}(V) = \varprojlim_n \mathbb L (V)/ \mathbb L^{>n}(V)$. Our main result  states

\begin{theoremb}\label{thm:main}   With the notation above, 
  the injection $(\mathbb L(V),d) \to (\widehat{\mathbb L}(V),d)$ induces an isomorphism in homology if,
   and only if, $H(L,d)$ is a finite type pronilpotent graded Lie algebra.
   \end{theoremb}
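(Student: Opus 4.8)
The plan rests on the bracket-length filtrations $F^pL=\mathbb{L}^{\geq p}(V)$ of $L$ and $\widehat{F}^p\widehat{L}=\widehat{\mathbb{L}}^{\geq p}(V)$ of $\widehat{L}$: both are preserved by $d$ (which cannot decrease bracket length) and have the same successive quotients, the finite type nilpotent dgl's $L_n:=L/\mathbb{L}^{>n}(V)=\widehat{L}/\widehat{\mathbb{L}}^{>n}(V)=\mathbb{L}^{\leq n}(V)$ (of finite type because $V=V_{\geq 0}$ is). Writing $d_1$ for the linear part of $d$, the spectral sequence of either filtration has $E^0=(\mathbb{L}(V),d_1)$, hence, by exactness of the free Lie functor over $\mathbb{Q}$, $E^1_p=\mathbb{L}^p(H(V,d_1))$, again of finite type. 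Two consequences: (i) the Milnor sequence for the tower $\{L_n\}$ (surjective structure maps) together with the Mittag--Leffler property of the finite dimensional tower $\{H_*(L_n)\}$ gives $H(\widehat{L},d)=\varprojlim_n H(L_n,d)$, and since $\widehat{F}$ is complete, Hausdorff and exhaustive with finite dimensional $E^r_{p,q}$ the spectral sequence of $\widehat{L}$ converges, so the filtration it induces on $H(\widehat{L})$ is complete and Hausdorff with $\mathrm{gr}_pH(\widehat{L})\subseteq\mathbb{L}^p(H(V,d_1))$; (ii) applying $R\varprojlim$ to $0\to\{\mathbb{L}^{\geq p}(V)\}_p\to\{L\}_p\to\{L/\mathbb{L}^{\geq p}(V)\}_p\to 0$ and using $\varprojlim_p\mathbb{L}^{\geq p}(V)=0$, $\varprojlim_p L/\mathbb{L}^{\geq p}(V)=\widehat{L}$, ${\varprojlim}^1_pL=0$ yields an isomorphism of chain complexes $\widehat{L}/L\cong{\varprojlim}^1_p(\mathbb{L}^{\geq p}(V),d)$, so $\iota$ is a quasi-isomorphism iff $\varprojlim_p H_*(\mathbb{L}^{\geq p}(V),d)=0={\varprojlim}^1_p H_*(\mathbb{L}^{\geq p}(V),d)$.

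Next I would dispose of the easy case: if $H(V,d_1)_0=0$ then $\mathbb{L}^q(H(V,d_1))_m=0$ for $q>m$, so the convergence in (i), applied to $\mathbb{L}^{\geq p}(V)$, gives $H_*(\mathbb{L}^{\geq p}(V),d)_m=0$ for $p$ large, whence $\iota$ is a quasi-isomorphism and $H(L)=H(\widehat{L})$ is automatically of finite type and pronilpotent. The content is therefore concentrated in the presence of degree-zero linear homology; there I would replace $(L,d)$ by a minimal model, which alters neither the homology nor --- by compatibility of minimal models with completions (the model map respects bracket length, so its completion is a filtered quasi-isomorphism of complete complexes, hence a quasi-isomorphism) --- the truth of ``$\iota$ is a quasi-isomorphism'', and so assume $d_1=0$, $E^1=\mathbb{L}(V)$.

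For $(\Rightarrow)$: if $\iota$ is a quasi-isomorphism then, since it is the identity on each $L/\mathbb{L}^{>n}(V)$, the five lemma applied to the long exact sequences of $0\to\mathbb{L}^{\geq p}(V)\to L\to L/\mathbb{L}^{\geq p}(V)\to 0$ and of its completed analogue shows $\mathbb{L}^{\geq p}(V)\hookrightarrow\widehat{\mathbb{L}}^{\geq p}(V)$ is a quasi-isomorphism for every $p$; by (i) the spectral sequence of $L$ then converges, so the bracket-length filtration on $H(L)=H(\widehat{L})$ is complete and Hausdorff with $\mathrm{gr}_pH(L)\subseteq\mathbb{L}^p(H(V,d_1))$. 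That $H(L)$ is then of finite type (equivalently, only finitely many $\mathrm{gr}_pH(L)$ are nonzero in each degree) and that the bracket-length filtration agrees with the lower central series filtration (so ``complete Hausdorff'' upgrades to ``pronilpotent'') are the delicate points; I would extract them from the vanishing of ${\varprojlim}^1_p H_*(\mathbb{L}^{\geq p}(V),d)$ by a degree-by-degree analysis in the minimal model, pushing the finite dimensionality of $\mathbb{L}^p(H(V,d_1))$ through the long exact sequences of the ideals $\mathbb{L}^{\geq p}(V)$.

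For $(\Leftarrow)$: if $H(L)$ is of finite type and pronilpotent then $H(L)=\varprojlim_n H(L)/\Gamma^{n+1}H(L)$ with finite type quotients, and by (i) it suffices to show $H(L)\to\varprojlim_n H(L_n)=H(\widehat{L})$ is an isomorphism. It is $\varprojlim_n\beta_n$ for the natural $\beta_n\colon H(L)/\Gamma^{n+1}H(L)\to H(L_n)$ (well defined as $H(L_n)$ is nilpotent of class $\leq n$), and the long exact sequences of $0\to\mathbb{L}^{\geq n+1}(V)\to L\to L_n\to 0$ give $\ker\beta_n=F^{n+1}H(L)/\Gamma^{n+1}H(L)$ ($F^\bullet$ the bracket-length filtration) and $\mathrm{coker}\,\beta_n=\mathrm{im}\big(H_*(L_n)\to H_{*-1}(\mathbb{L}^{\geq n+1}(V))\big)$. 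The finite type pronilpotency hypothesis is used to show $F^\bullet H(L)$ and $\Gamma^\bullet H(L)$ define the same topology (again via the minimal model and the identification of the bracket-length filtration of $H$ with its lower central series), so $\{\ker\beta_n\}$ is pro-trivial; an induction on nilpotency class fed by the finite type of $H_*(\mathbb{L}^{n+1}(V),d_1)=\mathbb{L}^{n+1}(H(V,d_1))$ shows $\{\mathrm{coker}\,\beta_n\}$ is pro-trivial; as all relevant ${\varprojlim}^1$-terms vanish by finite type, $\varprojlim_n\beta_n$ is an isomorphism. The main obstacle, common to both implications, is precisely the equivalence between the vanishing of $\varprojlim_p$ and ${\varprojlim}^1_p$ of the tower $\{H_*(\mathbb{L}^{\geq p}(V),d)\}_p$ and the finite type pronilpotency of $H(L,d)$ --- i.e. comparing the bracket-length and lower central series filtrations on $H(L,d)$; I expect this to require a careful finite dimensional bookkeeping in the minimal model rather than any formal argument, and to be the technical heart.
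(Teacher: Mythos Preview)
Your approach is genuinely different from the paper's, and it is incomplete at exactly the point you flag as the ``main obstacle''. The paper does not attempt to compare the bracket-length filtration with the lower central series on $H(L)$ at all: for $(\Rightarrow)$ it works directly with the images $I^p=\operatorname{im}(H(L^p)\to H(L))$, shows $\bigcap_p I^p=0$ and $H(L)\cong\varprojlim H(L)/I^p$ by an elementary stabilization argument on finite dimensional affine subsets of $L/L^q$, and then deduces finite type by a cardinality trick (if $\dim H(L)_q=\infty$ then $H(L)_q\cong\prod_n W^n$ with each $W^n$ finite dimensional and infinitely many nonzero, hence uncountable, contradicting $H(L)_q\hookrightarrow L_q$ countable). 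For $(\Leftarrow)$ the paper abandons the internal filtration viewpoint entirely: it invokes Neisendorfer's equivalence to replace $(\mathbb L(V),d)$ by $\mathcal L((\land Z,d)^\#)$ for a finite type minimal Sullivan algebra, and then proves $L\hookrightarrow\widehat L$ is a quasi-isomorphism by exhibiting a common retract $(s^{-1}Z^\#,0)$ using the Lie coalgebra functor $\mathcal E$ and its dual.

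What this means for your plan: the step you describe as ``careful finite dimensional bookkeeping in the minimal model'' is not bookkeeping --- it is the theorem. In $(\Rightarrow)$ your framework gives you that the bracket-length filtration on $H(\widehat L)$ is complete and Hausdorff with finite dimensional graded pieces, but you have not shown how to extract finite type of $H(L)$ from vanishing of $\varprojlim^1_p H_*(\mathbb L^{\geq p}(V),d)$; in fact the equivalence you assert between $H(\widehat L/L)=0$ and the simultaneous vanishing of $\varprojlim$ and $\varprojlim^1$ of the homology tower is not justified (the tower $\{\mathbb L^{\geq p}(V)\}$ has injective, not surjective, structure maps, so there is no Milnor sequence for it). In $(\Leftarrow)$ your argument hinges on showing that the bracket-length and lower central series filtrations on $H(L)$ define the same topology, which you assert ``via the minimal model'' without a mechanism; the paper's route through $\mathcal L((\land Z,d)^\#)$ and the degree-tripling trick $Z^n\cong Z'^{3n}$ is precisely a device to avoid this comparison. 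Your spectral sequence setup is sound and the reduction to the minimal case is reasonable, but as written the proposal stops short of the actual content on both sides.
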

 
Recall that a graded Lie algebra $L$ is \emph{pronilpotent} if $L= \varprojlim_p L/L^p$, where $L^p$ denotes  
the lower central series of   $L$, defined by:  
$$L^1= L \hspace{5mm}\mbox{and }  L^n=[L, L^{n-1}]\hspace{5mm}\mbox{ for $n>1$.}$$

 \thmref{thm:main} has importance in rational homotopy. Indeed, let $X$ be a finite simplicial complex 
that is connected and nilpotent. 
Then, a rational dgl model for $X$ was constructed by J.~Neisendorfer (\cite{Neis}). 
This dgl is of the form $(\mathbb L(V),d)$, with $V\cong s^{-1}\widetilde H_*(X;\mathbb Q)$ and 
its homology is the rational homotopy Lie algebra of $X$. 
More recently, based on the works of Dupont (\cite{du}), and of Lawrence and Sullivan (\cite{LS}), 
a new dgl model has been defined for any finite simplicial complex (\cite{BFMT}). 
The simplest way to construct it consists in using the transfer diagram 
$$
\xymatrix{
{\fracd}\colon
&
 \ar@(ul,dl)@<-5.5ex>[]_{\phi}
 &
  A_{PL}(X) \ar@<0.75ex>[r]^-{p}
  &
   {C^*(X),} \ar@<0.75ex>[l]^-{i} }
$$
where $C^*(X)$ denotes the rational simplicial cochain complex on $X$. This transfer diagram induces a differential 
$d$ on the graded Lie algebra  $\widehat{\mathbb L}(W)$, where $W=s^{-1}C_{*}(X;\Q)$, 
turning   $\lasu_X:= (\widehat{\mathbb L}(W),d)$ into a dgl.  
Let $x_0$ be a base point in $X$. Then, the quotient   
$\ov{\lasu}_{X}=\lasu_{X}/\lasu_{(x_{0)}}$  
is called the \emph{reduced Sullivan Lie model of $X$ (\cite{Getz,BFMT})}. 
More details can be found in Section \ref{Sec3}. As a corollary of our main theorem, we deduce

\begin{theoremb} Let $X$ be a connected, nilpotent and finite simplicial complex. 
Then,  its Neisendorfer model and its reduced Sullivan Lie model are quasi-isomorphic.
\end{theoremb}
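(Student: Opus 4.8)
The plan is to apply \thmref{thm:main} to Neisendorfer's model of $X$ and then to recognise the completion of that model as the reduced Sullivan Lie model $\ov{\lasu}_{X}$.

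Write $N_X=(\mathbb L(V),d)$ for Neisendorfer's model, so that $V\cong s^{-1}\widetilde{H}_*(X;\mathbb Q)$ and $H(N_X)$ is the rational homotopy Lie algebra $\pi_*(\Omega X)\otimes\mathbb Q$ of $X$; explicitly, $H_0(N_X)$ is the Malcev Lie algebra of $\pi_1(X)$ and $H_n(N_X)\cong\pi_{n+1}(X)\otimes\mathbb Q$ for $n\geq 1$. Since $X$ is a connected finite complex, $V=V_{\geq 0}$ is finite dimensional, so $N_X$ is a dgl of the type considered in \thmref{thm:main}, and everything reduces to checking that $H(N_X)$ is a finite type pronilpotent graded Lie algebra. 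Finite type is classical for nilpotent finite complexes. For pronilpotency: as $H(N_X)$ is of finite type and non-negatively graded, restricted to any fixed degree the descending chain $H(N_X)^{1}\supseteq H(N_X)^{2}\supseteq\cdots$ lives in a finite dimensional space and hence stabilises, so that $H(N_X)=\varprojlim_p H(N_X)/H(N_X)^{p}$ precisely when $\bigcap_p H(N_X)^{p}=0$. In degree $0$ this intersection is $\bigcap_p H_0(N_X)^{p}$, which is zero because $\pi_1(X)$, being the fundamental group of a nilpotent space, is a nilpotent group. In a positive degree $k$, an element of $H(N_X)^{p}$ of degree $k$ is a sum of $p$-fold iterated brackets of homogeneous elements of non-negative degrees adding up to $k$; at most $k$ of the factors have positive degree, so for $p$ large every such bracket involves a long chain of degree-$0$ factors acting, through iterated adjoint operators, on the finite dimensional module $\pi_{k+1}(X)\otimes\mathbb Q$. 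Since $X$ is nilpotent, $\pi_1(X)$ acts nilpotently on $\pi_{k+1}(X)$, hence $H_0(N_X)$ acts nilpotently on $\pi_{k+1}(X)\otimes\mathbb Q$, and a rearrangement by the Jacobi identity then forces such brackets to vanish once $p$ exceeds a bound depending only on $k$. Therefore $\bigcap_p H(N_X)^{p}=0$, $H(N_X)$ is pronilpotent, and \thmref{thm:main} gives a quasi-isomorphism $N_X\hookrightarrow(\widehat{\mathbb L}(V),d)$.

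It remains to identify $(\widehat{\mathbb L}(V),d)$ with $\ov{\lasu}_{X}$ up to quasi-isomorphism. Both are complete free dgls concentrated in non-negative degrees. Neisendorfer's model may be taken minimal, so that $d$ is decomposable; then $(\widehat{\mathbb L}(V),d)$ is a minimal complete dgl with generating space $s^{-1}\widetilde{H}_*(X;\mathbb Q)$ and, by the previous paragraph, homology $\pi_*(\Omega X)\otimes\mathbb Q$. On the other hand $\ov{\lasu}_{X}$ is a complete Lie model of $X$ (\cite{Getz,BFMT}), and its minimal model --- obtained by passing to the homology of the linear part of its differential --- again has generating space $\cong s^{-1}\widetilde{H}_*(X;\mathbb Q)$ and homology $\pi_*(\Omega X)\otimes\mathbb Q$. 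Using Neisendorfer's identification of $H(N_X)$ (\cite{Neis}) together with the comparison, in \cite{BFMT}, between the functor $\lasu_{(-)}$ and Quillen's Lie model functor --- of which $N_{(-)}$ is the nilpotent analogue (it coincides with Quillen's model, and $(\widehat{\mathbb L}(V),d)=N_X$, when $X$ is simply connected) --- one identifies $(\widehat{\mathbb L}(V),d)$ with the minimal model of $\ov{\lasu}_{X}$. As minimal complete Lie models are unique up to isomorphism and quasi-isomorphic to any model of the same homotopy type (\cite{BFMT}), we obtain $(\widehat{\mathbb L}(V),d)\simeq\ov{\lasu}_{X}$, whence $N_X$ is quasi-isomorphic to $\ov{\lasu}_{X}$.

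The heart of the proof, and the only place where the hypothesis that $X$ is nilpotent enters, is the pronilpotency of $H(N_X)$: the point is to translate the topological nilpotency of $X$ into the statement that the lower central series of $\pi_*(\Omega X)\otimes\mathbb Q$ terminates in each degree, and the nilpotence of the $\pi_1$-action on the higher rational homotopy groups is unavoidable there --- for a wedge of circles, for instance, $\pi_1$ is free, $H(N_X)$ is a free graded Lie algebra on degree-$0$ generators, and the completion map fails to be a quasi-isomorphism, in agreement with \thmref{thm:main}. The identification of $(\widehat{\mathbb L}(V),d)$ with the minimal model of $\ov{\lasu}_{X}$ is more routine: for simply connected $X$ it is the classical comparison of Quillen's and Sullivan-type Lie models recorded in \cite{BFMT}, and in the nilpotent case it only requires checking that the two complete models realise to the same $\mathbb Q$-local homotopy type, which holds once a natural comparison map between them is available.
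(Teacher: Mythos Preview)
Your first half --- verifying that $H(N_X)$ is finite type pronilpotent and invoking \thmref{thm:main} --- is a legitimate route to the quasi-isomorphism $N_X\simeq\widehat{N_X}$, and it makes the role of the nilpotency hypothesis on $X$ transparent. The paper does not argue this way: it never checks pronilpotency topologically, but instead replaces $N_X$ by the specific model ${\mathcal L}(m_X^{\#})$ (with $m_X$ the minimal Sullivan model of $X$) and then proves directly, via the Sinha--Walter functor ${\mathcal E}$, that ${\mathcal L}(m_X^{\#})\to\widehat{\mathcal L}(m_X^{\#})$ is a quasi-isomorphism (this is \propref{prop:LLhat}). Either approach is fine for this step; yours is more ``topological'', the paper's more algebraic.

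The gap is in your second half. To finish you must identify $\widehat{N_X}$ with $\ov{\lasu}_X$ up to quasi-isomorphism, and here you only assert that ``a natural comparison map is available'' and appeal loosely to \cite{BFMT} and uniqueness of minimal complete models. But having the same homology Lie algebra and the same indecomposables is not enough to conclude that two minimal complete dgl's are isomorphic; you need an actual zigzag of quasi-isomorphisms, and constructing it is precisely the content of this part of the argument. The paper supplies this bridge explicitly: from the transfer diagram one has a quasi-isomorphism of dglc's $\ov{\lasu}^c(X)\to{\mathcal E}(A_{PL}(X))$, and since ${\mathcal E}$ preserves quasi-isomorphisms this dualises to $\ov{\lasu}_X\simeq({\mathcal E}(m_X))^{\#}$. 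A direct computation (the second lemma after \lemref{Lema2}) then identifies $({\mathcal E}(m_X))^{\#}$ with $\widehat{\mathcal L}(m_X^{\#})$, i.e.\ with the completion of the Neisendorfer model. Without this ${\mathcal E}$-functor bridge --- or some equally concrete substitute --- your identification step is not a proof; in particular, the Quillen-model comparison you cite from \cite{BFMT} is established for simply connected $X$, and extending it to the nilpotent case is exactly what is at stake.
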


\section{Pronilpotent graded Lie algebras}

We characterize the pronilpotency of a finite type graded Lie algebra in Lemma \ref{lemprel}. 
Next, we prove the sufficiency in Theorem \ref{thm:main} as Proposition \ref{Prop1}. 
In this section, we assume that $L=L_{\geq0}$ is a non negatively graded Lie algebra,
not necessarily free as graded Lie algebra.

\medskip
For each $p\geq 1$, we denote by $G_p^n$ the descending series of $L_0$-modules
$$G_p^1= L_p\,, \hspace{5mm}\mbox{and  } G_p^n= [L_0, G_p^{n-1}]\,\hspace{5mm} \mbox{ for $n>1$}.$$

\begin{lemma} \label{lemprel} A finite type graded lie algebra $L$ is pronilpotent  if, and only if,
\begin{enumerate}[(a)]
\item $L_0$ is nilpotent, and
\item for each $p>0$, there is an integer $n(p)$ such that $G_p^{n(p)}= 0$. 
\end{enumerate}
\end{lemma}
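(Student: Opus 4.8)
The plan is to strip the statement down to a fact about the lower central series of $L$ and then play it off against the grading. First I would record that, because $L$ is of finite type, $L$ is pronilpotent if and only if $\bigcap_{p\ge 1}L^p=0$: in a fixed degree $q$ the subspaces $(L^p)_q$ form a descending chain inside the finite dimensional space $L_q$, hence stabilize, so $\varprojlim_p L_q/(L^p)_q$ is the stable quotient and the canonical map $L_q\to\varprojlim_p L_q/(L^p)_q$ is just the projection, which is an isomorphism exactly when its kernel $\bigcap_p(L^p)_q$ is trivial. Thus $L\to\varprojlim_p L/L^p$ is an isomorphism precisely when $\bigcap_p L^p=0$, and it remains to prove that this is equivalent to (a) together with (b).

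\emph{Necessity of (a) and (b).} Since $L=L_{\ge 0}$, an easy induction gives $(L^p)_0=(L_0)^p$, so $\bigcap_p(L_0)^p=(\bigcap_p L^p)_0=0$; as $L_0$ is finite dimensional its lower central series stabilizes, hence it stabilizes at $0$, i.e. $L_0$ is nilpotent, which is (a). For (b), an induction on $n$ starting from $G_p^1=L_p\subseteq L$ and using $G_p^{n}=[L_0,G_p^{n-1}]\subseteq[L,L^{n-1}]=L^n$ shows $G_p^n\subseteq (L^n)_p$; the right hand side is a descending chain of subspaces of the finite dimensional space $L_p$ with trivial intersection, hence eventually zero, so $G_p^n=0$ for $n$ large.

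\emph{Sufficiency.} Assume (a), say $(L_0)^c=0$, and (b), say $G_p^{n(p)}=0$; I must prove $(L^n)_q=0$ for $n$ large, for each fixed $q$. Put $H_p^k:=[L,[L,\dots,[L,L_p]\dots]]$ with $k$ outer brackets. Then $H_p^k\subseteq L_{\ge p}$, and since $L=\bigoplus_p L_p$ one has $L^n=\sum_{p\ge 0}H_p^{n-1}$, so $(L^n)_q=\sum_{p=0}^q (H_p^{n-1})_q$ and it is enough to bound each $(H_p^k)_q$. The crucial point is that the degree $p$ part of $H_p^k$ is exactly $G_p^{k+1}$ (reading off degrees, only $[L_0,-]$ can preserve the degree, and $(H_p^0)_p=L_p$), with the convention $G_0^{k+1}=(L_0)^{k+1}$; by (a)--(b) this part vanishes once $k\ge n(p)$ (resp. $k\ge c$ when $p=0$), so for such $k$ we get $H_p^k\subseteq L_{\ge p+1}$ and therefore $H_p^{k+m}\subseteq\sum_{p'\ge p+1}H_{p'}^m$. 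A downward induction on $p$, with base case $p=q$ where $(H_q^k)_q=G_q^{k+1}=0$ for $k$ large, then produces for every $p\le q$ an integer $N_{p,q}$ with $(H_p^k)_q=0$ for $k\ge N_{p,q}$; taking $n\ge 1+\max_{0\le p\le q}N_{p,q}$ finishes the argument.

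The only real work is this sufficiency step: one has to control how the lower central series filtration of $L$ interleaves with the grading, ensuring that the ``raise the bottom degree by one'' moves (coming from the weight filtration of the ideal $L_{\ge 1}$) can occur only finitely often before everything sits above degree $q$, while within each fixed degree the surviving iterated brackets are precisely the terms $G_p^\bullet$, which die by hypothesis. The doubly indexed family $H_p^k$ is exactly the bookkeeping device that separates these two effects; everything else is formal. (Equivalently, one can phrase sufficiency as the assertion that each finite dimensional graded quotient $L/L_{\ge q+1}$ is nilpotent, by an Engel-type argument using that $L_0$ is nilpotent and acts nilpotently on every $L_p$.)
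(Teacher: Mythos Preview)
Your proof is correct and follows the same overall strategy as the paper: reduce pronilpotency to the vanishing $\bigcap_n L^n=0$ using finite type, deduce (a) and (b) from the inclusions $(L_0)^n=(L^n)_0$ and $G_p^n\subseteq (L^n)_p$, and for the converse show that $(L^n)_q=0$ for $n$ large. The only substantive difference is in the sufficiency step: the paper simply asserts that (a) and (b) imply ``for each integer $p$ there is some $n$ with $L^n=(L^n)_{\geq p}$'' and moves on, whereas you actually prove this via the auxiliary filtration $H_p^k$ and a downward induction on $p$; your argument is thus more complete on exactly the point the paper leaves to the reader.
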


\begin{proof} 
Suppose $L$  pronilpotent. Then we have $\cap_n L^n= 0$. Therefore, if $(L^n)_q\neq 0$, then $(L^n)_q$ contains strictly $(L^{n+1})_q$. Since dim$\, L_q<\infty$, this implies (a) and (b). 

Conversely, suppose (a) and (b)  satisfied. Then for each integer $p$, there is some $n$ with $L^n = (L^n)_{\geq p}$. 
This implies $\cap_n L^n= 0$, and the canonical map $\varphi \colon L\to \varprojlim_n L/L^n$ is injective. Now, for each degree $p$, there is some $n(p)$ such that
$$(\varprojlim_n L/L^n)_p = (L/L^{n(p)})_p\,.$$
Therefore, $\varphi$ is surjective and $L$ is pronilpotent. 
\end{proof}

Now, we prove the first part of our main theorem. Let $(L,d)= (\mathbb L(V),d)$ be a dgl where $V= V_{\geq 0}$ is a finite type graded vector space. 

\begin{proposition}\label{Prop1} 
If the injection $(L,d)\to (\widehat{L},d)$ is a quasi-isomorphism, then 
$H(L,d)$ is a pronilpotent Lie algebra.
\end{proposition}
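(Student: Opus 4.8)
The plan is to reduce the assertion to the statement that $H(L,d)$ is of finite type, and then invoke Lemma \ref{lemprel}. First I would note that, as $d$ is a derivation with $d(V)\subseteq\mathbb L(V)=\mathbb L^{\geq 1}(V)$, it preserves the word--length filtration, hence descends to the nilpotent dgl's $L_{(n)}:=\mathbb L(V)/\mathbb L^{>n}(V)$; each $L_{(n)}$ is of finite type since $V$ is, and $\widehat L=\varprojlim_n L_{(n)}$. Because the spaces $H_\ast(L_{(n)})$ are finite dimensional in each degree, the tower $\{H_\ast(L_{(n)})\}_n$ is Mittag--Leffler, so $\varprojlim^{1}_{n} H_\ast(L_{(n)})=0$ and the Milnor $\varprojlim^1$ sequence of the tower $\{L_{(n)}\}$ yields $H_\ast(\widehat L,d)\cong\varprojlim_n H_\ast(L_{(n)})$. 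Combined with the hypothesis, the natural map $H(L)\to\varprojlim_n H(L_{(n)})$ is then an isomorphism; putting $Q_n:=\operatorname{im}\bigl(H(L)\to H(L_{(n)})\bigr)$, it follows that $H(L)=\varprojlim_n Q_n$ with surjective transition maps, and each $Q_n$ is a graded Lie algebra nilpotent of step $\leq n$ because $L_{(n)}^{\,n+1}=0$.

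Next I would introduce $\mathcal F^{n}:=\ker\bigl(H(L)\to H(L_{(n-1)})\bigr)$. From the short exact sequences of dgl's $0\to\mathbb L^{\geq n}(V)\to\mathbb L(V)\to L_{(n-1)}\to 0$ one identifies $\mathcal F^{n}$ with $\operatorname{im}\bigl(H(\mathbb L^{\geq n}(V),d)\to H(L)\bigr)$, whence $\mathcal F^{1}=H(L)$, $[\mathcal F^{a},\mathcal F^{b}]\subseteq\mathcal F^{a+b}$, $\bigcap_n\mathcal F^{n}=0$, and $H(L)=\varprojlim_n H(L)/\mathcal F^{n}$. Since $H(L)/\mathcal F^{n}\cong Q_{n-1}$ is nilpotent of step $<n$, we get $H(L)^{n}\subseteq\mathcal F^{n}$; mapping into $H(L_{(n)})$ and using that it is $n$--step nilpotent, one sees moreover that in each degree $q$ the descending series of Lemma \ref{lemprel} satisfies $G_q^{m}(H(L))\subseteq(\mathcal F^{m})_q$ for every $m$, and likewise $\bigl(H(L)_0\bigr)^{m}\subseteq(\mathcal F^{m})_0$.

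Granting that $H(L,d)$ is of finite type, the conclusion is then immediate: for a fixed degree $q$ the groups $(\mathcal F^{m})_q$ form a descending chain of subspaces of the finite dimensional space $H(L)_q$ with zero intersection, hence are eventually $0$; therefore $G_q^{m}(H(L))=0$ for $m$ large, and, taking $q=0$, $\bigl(H(L)_0\bigr)^{m}=0$ for $m$ large, i.e.\ $H(L)_0$ is nilpotent. Conditions (a) and (b) of Lemma \ref{lemprel} hold, and since $H(L)$ is of finite type that lemma gives that $H(L,d)$ is pronilpotent.

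The main obstacle is thus the finite type statement --- equivalently, that $(\mathcal F^{n})_q=0$ for $n$ large in each $q$, equivalently that the surjective tower $\{(Q_n)_q\}_n$ stabilises. This genuinely uses the hypothesis: for $V$ concentrated in degree $0$ with $\dim V\geq 2$ and $d=0$, $H(L)=\mathbb L(V)$ is neither of finite type nor pronilpotent (and, more generally, an inverse limit of finite type nilpotent graded Lie algebras need not be pronilpotent), so no purely formal argument can succeed. To prove finiteness I would compare the spectral sequences of the word--length filtration on $(L,d)$ and on $(\widehat L,d)$: they have the same $E_1$--page, the word--length--graded free Lie algebra $\mathbb L\bigl(H(V,d_1)\bigr)$ on the homology of the linear part $d_1$ of $d$, they agree on every page since they depend only on the common associated graded complex, and the inclusion induces the identity on each $E_r$; as the filtration of $\widehat L$ is complete and that of $L$ is exhaustive, both converge, so $\operatorname{gr}H(L)\cong E_\infty\cong\operatorname{gr}H(\widehat L)$. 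The delicate point is then to promote the a priori non--complete filtered object $H(L)$ --- every class of which is represented by a cycle of bounded word length in $\mathbb L(V)$ --- to an object strictly isomorphic to the complete $H(\widehat L)$, and to deduce from this that $(\mathcal F^{n})_q$ vanishes for $n\gg 0$; carrying out this last step is where the real content of the proposition lies.
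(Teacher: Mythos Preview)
Your reduction is clean and correct up to the point you yourself flag. Using Mittag--Leffler for the tower $\{H_*(L_{(n)})\}$ to obtain $H_*(\widehat L)\cong\varprojlim_n H_*(L_{(n)})$, and then identifying $\mathcal F^{n}$ with the paper's $I^{n}=\operatorname{im}\bigl(H(\mathbb L^{\geq n}(V))\to H(L)\bigr)$, recovers in one stroke what the paper proves by two explicit constructions (that $\bigcap I^p=0$ and that $E\to\varprojlim E/I^p$ is onto). That part is a genuine improvement in presentation.

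The gap is exactly where you say it is: you do not prove that $H(L)$ is of finite type, and the spectral--sequence sketch does not do it. Your sentence ``as the filtration of $\widehat L$ is complete and that of $L$ is exhaustive, both converge'' is incorrect: exhaustiveness of a decreasing filtration does not guarantee convergence to $H(L)$, and indeed in the example of Remark~\ref{rem:simple} the word--length spectral sequence has $E_1=\mathbb L(y)\cong\mathbb Q\,y$, which is the homology of $\widehat L$, not the two--dimensional $H(L)_0$. So the two spectral sequences agree on every page, yet one of them fails to abut to $H(L)$; the comparison you propose therefore cannot by itself detect finite type, and the ``delicate point'' you defer is the whole proposition.

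What is missing is a cardinality argument, which is how the paper closes the gap. From your own setup you have, in each degree $q$, an isomorphism $H(L)_q\cong\varprojlim_n (Q_n)_q$ along surjections of finite--dimensional spaces. Choose complements $W^n$ of $\ker\bigl((Q_n)_q\to(Q_{n-1})_q\bigr)$ so that $(Q_n)_q\cong W^1\oplus\cdots\oplus W^n$; then $H(L)_q\cong\prod_{n\geq 1}W^n$. If the tower did not stabilise, infinitely many $W^n$ would be nonzero and $H(L)_q$ would have uncountable $\mathbb Q$--dimension. But $H(L)_q$ is a subspace of $L_q=\bigoplus_{r\geq 1}\mathbb L^r(V)_q$, a countable direct sum of finite--dimensional spaces (since $V$ is of finite type), hence of countable dimension. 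This contradiction forces $(\mathcal F^{n})_q=0$ for $n\gg0$, and your Lemma~\ref{lemprel} argument then finishes the proof exactly as you wrote it.
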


\begin{proof} Write $E = H(L,d)$, and $I^p$ for the image of $H(L^p)$ in $E$. We first prove that $E = \varprojlim_p E/I^p$.
 To do so, we start by proving that $\cap_p I^p= 0$. Assume that $0\neq [a]\in \cap_pI^p$. Since an element of $I^p$ is
  represented by a cycle in $L^p$, there are cycles $a_n\in L^n$ with $[a_n]= [a]$, and elements $v_n\in L$ with 
  $dv_n = a-a_n$. It follows that $dv_n= a$ in $L/L^n$. 

Fix some integer $n$ and for $q>n$, let $V_q\subset L/L^q$ be the set of elements $x$ such that $dx=a$.
Denote by $V_{n,q}$ the image of $V_q$ in $L/L^n$ along the projection $L/L^q\to L/L^n$. Then, the sequence
$$V_{n,n}\supset V_{n, n+1}\supset \dots$$
is a sequence of non empty finite dimensional affine spaces. It must thus stabilize. 
Therefore, there is a sequence of elements $ b_q\in L/L^q $ such that $db_q=a$ in $L/L^q$ and $b_q=b_{q-1}$ 
in $L/L^{q-1}$. The sequence $(b_q)$ determines an element $b$ in $\widehat L,$ and since $db_q=a$ in 
$L/L^q$ for all $q$, $db=a$, i.e., $[a]= 0$. 
This implies that $\cap_n I^n= 0$. Therefore, the natural map $\varphi\colon E\to  \varprojlim_p E/I^p$ is injective. 

The second step consists in proving that $\varphi\colon E\to \varprojlim_p E/I^p$ is surjective. Let $(a_q)$ be an element in $\varprojlim_p E/I^p$. Then the $a_q$ are cycles in $L$, and there exist elements $b_q\in L$   with $db_q =  a_{q+1}-a_q $ in $L/L^q$. We denote by $V_n$ the image of $a_n+ d(L)$ in $L/L^n$. In each degree the space $V_n$ is a finite dimensional affine space. By definition of the sequence $a_n$, for $q>n$, the restriction map $L/L^q \to L/L^{n}$ maps $
V_q$ into $V_{n}$. We denote the image by $V_{n,q}$. Since for each degree, the sequence $$\dots \subset V_{n,q}\subset V_{n, q-1}  \subset \dots \subset V_n$$
stabilizes, there is a sequence of cycles $u_n \in L/L^n$ such that $u_n = u_{n-1}$ in $L/L^{n-1}$ and $[u_n]= [a_n]$ 
in $H(L/L^n)$. It follows directly that $[a_n]=[u_n]$ in $E/I_n$. In $\widehat{L}$, 
the sequence $u_n$ lifts to a cycle $u$, and   in $E/I_n$, $[u]=[a_n]$. 
Since $H(L)\cong H(\widehat{L})$ we have a cycle $v\in L$ with $\varphi (v) = (a_n)$, 
and the surjectivity of $\varphi$ is proven. 

Now, assume that for some $q$, dim$\, E_q=\infty$. Let $W^{n}$ be a supplement of $I^{n+1}_q$ in $I_q^n$, 
$$I_q^n = W^n \oplus I_q^{n+1}\,.$$
Then, $$ {E_q} \cong \prod W^n$$ as graded vector spaces. 
Since $L$ is finite type, dim$\, W^n<\infty$ for all $n$, and $\sum_n \mbox{dim} W^n=\infty$. 
Therefore  ${E}_q$ is an uncountable vector space, but this is impossible since ${E}_q$ injects as a graded vector space 
into  $L_q$, which is a countable vector space. 
It follows that $E_q$ is a finite dimensional vector space for all $q$, 
and that for each $q$ there is an integer $n(q)$ such that $I_q^{n(q)}= 0$.

We have proven the nilpotency of $E_0$, because a class in $E_0^{n(0)}$ is represented by a cycle in $L^{n(0)}$ and,
by definition of $n(0)$, such a cycle is a boundary in $L$.  
The same argument proves that the sequence $G^n_q\supset G_q^{n+1}$  stabilizes at $0$. 
By Lemma \ref{lemprel}, the result follows. 
\end{proof}

\begin{remark}\label{rem:simple}
 \thmref{thm:main} implies that any dgl whose homology is not finite type or pronilpotent is not quasi-isomorphic to its completion. 
 For instance, let $L=\mathbb L (x,y,z)$ with $\vert x\vert= \vert y\vert = 0$, $\vert z\vert = 1$, $dx=dy=0$ and $dz=x-[y,x]$.
  Then, $H(L)_0$ is the 2-dimensional Lie algebra on $x$ and $y$, with bracket $[y,x]=0$.
 Meanwhile in the completion the element $x$ is a boundary, $x= d(\sum_{q\geq 0} \ad^q_y(z))$, and therefore
 $H(\widehat L)_0$ $\cong \mathbb Q  y$.
 \end{remark}

\section{The Quillen   functors $\mathcal C$ and $\mathcal L$ and the proof of the main Theorem}

Let $\catcdga$ be the category of augmented commutative differential graded algebras (cdga's hereafter) $(A,d)$ concentrated in nonnegative degrees with a differential of degree $+1$ and whose cohomology is finite dimensional in each degree, with $H^0(A)= \mathbb Q$.  

The free graded commutative algebra on a graded vector space $V = V^{\geq 1}$, denoted by  $\land V$,   is the quotient of the tensor algebra on $V$ by the ideal generated by the relations $x\otimes y - (-1)^{\vert x\vert\cdot \vert y\vert} y\otimes x$.

A cdga $(A,d)$ is a \emph{minimal Sullivan algebra} if $A = \land V$, where $V=V^{\geq 1}$ is equipped with a filtration $V = \cup_n V(n)$ such that $d \colon  V\to \land^{\geq 2}V$ and $dV(n)\subset \land V(n-1)$. 

A basic theorem in rational homotopy (\cite{FHTII}) states that for each $(A,d)\in \catcdga$, there is a minimal Sullivan algebra $(\land V,d)$ equipped with a quasi-isomorphism   $\varphi \colon  (\land V,d)\to (A,d)$. Then, $(\land V,d)$ is called the \emph{minimal Sullivan model} of $(A,d)$.   
 
 Denote by $\CDGC$ the category of cocommutative differential graded coalgebras (cdgc's hereafter) 
 with degree $-1$ differential and which are connected, i.e., of the form $C=\Q\oplus C_{\geq1}$. 
 Denote by $\catdgl$ the category of differential graded Lie algebras which are concentrated in nonnegative degrees, 
 with differential of degree -1. 
 Following Quillen (\cite{Q}),   Neisendorfer (\cite{Neis}) defines adjoint functors 
 $$\xymatrix{
 \CDGC \ar@/^/[rr]^{\mathcal L} &&  \catdgl \ar@/^/[ll]^{\cC}}$$
  and proves that the adjunction maps $C \to {\mathcal C}{\mathcal L}(C)$ and ${\mathcal L}{\mathcal C}(L)\to L$ are quasi-isomorphisms.
Here, $\cC(L,d)$ is the usual coalgebra chain complex of $(L,d)$. If $(C, \delta)$ is a cdgc with diagonal $\Delta$, then 
one sets
$${\mathcal L}(C, \delta) = (\mathbb L(s^{-1}\overline{C}), d_1+d_2)\,,$$ 
where  
$d_1\colon s^{-1}\overline{C}\to s^{-1}\overline{C}$ is the desuspension of $\delta$ and $d_2$ is determined by
 $$d_2(s^{-1}a) = -\sum_i (-1)^{\vert a_i\vert} [s^{-1}a_i, s^{-1}a_i'],$$ 
 with
  $\Delta (a) = a\otimes 1+ 1\otimes a + \sum_i (a_i\otimes a_i' + (-1)^{\vert a_i\vert \cdot \vert a_i'\vert} a_i'\otimes a_i)$.
In fact, ${\mathcal L}(C,\delta)$ consists of the primitive elements in the cobar construction on $(C,\delta)$.

Now denote by $n{\catcdga}$ the full subcategory of $\catcdga$ formed by those cdga's admitting a minimal model $(\land Z,d)$ in which $Z$ is a finite type graded vector space. Denote also by n$\catdgl$ the full subcategory of $\catdgl$ formed by those dgl's $(L,d)$ with  finite type pronilpotent homology.
 Then, the associated homotopy categories are equivalent (\cite[Proposition 7.3]{Neis}) 
 (By definition the homotopy category h$_0$n$\catdgl$ is  the subcategory of h$_0$$\catdgl$ generated by the objects in n$\catdgl$, and similarly for h$_0$n$\catcdga$). 
 
 Each dgl $(\mathbb L(V),d)$ in n$\catdgl$ is quasi-isomorphic to a dgl of the form ${\mathcal L}(\land Z,d)^\#$, where $(\land Z,d)$ is a minimal Sullivan algebra with $Z= Z^{\geq 1}$ of finite type, and $(\land Z,d)^\#$ is the dual differential coalgebra,
 $$(\mathbb L(V),d) \simeq  {\mathcal L}(\land Z,d)^\#\,.$$
 
 A complete dgl  $(\widehat{\mathbb L}(V),d)$ is said to be \emph{minimal} if $dV\subset  \hL^{\geq 2}(V)$. 
Clearly, a morphism  of complete dgl's, $f\colon (\widehat{\mathbb L}(V),d)\to (\widehat{\mathbb L}(W),d)$,
is a quasi-isomorphism if, and only if, the induced map on the homology of indecomposable elements is an isomorphism. 
It follows that each complete dgl   $(\widehat{\mathbb L}(V),d)$ admits a unique minimal model.

In our situation, since $(\mathbb L(V),d)\simeq {\mathcal L}(\land Z,d)^\#$,  
the homologies of the indecomposable subspaces are isomorphic, and thus their completions are also quasi-isomorphic,
$$(\widehat{\mathbb L}(V),d)\simeq \widehat{\mathcal L}(\land Z,d)^\#.$$
Therefore, the injection $(\mathbb L(V),d)\to (\widehat{\mathbb L}(V),d)$ is a quasi-isomorphism if, and only if,
the corresponding  injection for ${\mathcal L}(\land Z,d)^\#$ is a quasi-isomorphism. 

\begin{proposition}\label{prop:LLhat} Let   $(L,d) = {\cL}(\land Z,d)^\#$, where
$(\land Z,d)$ is a finite type minimal Sullivan algebra with $Z = Z^{\geq 1}$. Then the injection 
$$(L,d) \to (\widehat{L},d)$$
is a quasi-isomorphism.\end{proposition}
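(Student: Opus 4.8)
The plan is to equip $\mathcal L(\land Z,d)^\#$ with a second grading coming from the word-length of $\land Z$, to read off from the minimality of $\land Z$ a degree estimate relating that grading to the bracket-length, and then to compare the bracket-length spectral sequences of $\mathcal L(\land Z,d)^\#$ and of its completion. So I would start by describing $(L,d)=\mathcal L(\land Z,d)^\#=(\mathbb L(V),D)$, where $V=s^{-1}\overline C$ with $C=(\land Z,d)^\#$. The word-length decomposition of $\land Z$ dualizes to a \emph{weight} decomposition $\overline C=\bigoplus_{k\ge1}C^{(k)}$, $C^{(k)}=(\land^{k}Z)^\#$, hence to $V=\bigoplus_{k\ge1}V^{(k)}$ with $V^{(k)}=s^{-1}C^{(k)}$ concentrated in internal degrees $\ge k-1$ (because $\land^{k}Z=(\land^{k}Z)^{\ge k}$ when $Z=Z^{\ge1}$). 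Extending the weight grading multiplicatively to $\mathbb L(V)$, the differential splits as $D=D_1+D_2$: the quadratic part $D_2$ (desuspension of the reduced diagonal of $C$) raises bracket-length by exactly one and preserves weight, while the linear part $D_1$ (desuspension of the internal differential of $C$) preserves bracket-length and \emph{strictly lowers} weight — this last point is precisely the minimality of $\land Z$, $d(\land^{j}Z)\subset\land^{\ge j+1}Z$, read on the dual. I would draw two consequences: the bracket-length completion of $\mathbb L(V)$ coincides with its weight completion; and, since a bracketing of $p$ generators of weights $k_1,\dots,k_p$ has internal degree $\ge\sum(k_i-1)=(\mathrm{weight})-p$, in a fixed internal degree $q$ the weight and the bracket-length of a monomial differ by at most $q$. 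In particular every $\mathbb L^{p}(V)$, and every truncation $\mathbb L(V)/\mathbb L^{\ge n}(V)$, is of finite type.

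Now I would run the spectral sequence of the bracket-length filtration $F_pL=\mathbb L^{\ge p}(V)$ and, simultaneously, that of $F_p\widehat L=\widehat{\mathbb L}^{\ge p}(V)$. For both, the $E^0$-page is $\bigoplus_p(\mathbb L^{p}(V),D_1)=\mathbb L(V,D_1)$, the free graded Lie algebra on the complex $(V,D_1)$; since the free Lie algebra functor commutes with homology over $\mathbb Q$ and $H(V,D_1)=s^{-1}H_+(C,\delta)$ is of finite type, the $E^1$-page is $\mathbb L\bigl(s^{-1}H_+(C,\delta)\bigr)$ with $d^1$ induced by $D_2$, the same for $L$ and for $\widehat L$. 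The inclusion $L\hookrightarrow\widehat L$ is filtration-preserving and induces the identity on $E^0$; since each page $E^r$ and each differential $d^r$ is manufactured out of finitely many consecutive filtration quotients $F_p/F_{p+m}$, on which $\mathbb L(V)$ and $\widehat{\mathbb L}(V)$ literally agree, an induction on $r$ yields isomorphisms $E^r(L)\xrightarrow{\ \cong\ }E^r(\widehat L)$ for all $r$, hence $E^\infty(L)\cong E^\infty(\widehat L)$.

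The remaining step — deducing from this that $H(L)\to H(\widehat L)$ is itself an isomorphism, i.e. establishing strong convergence of both spectral sequences — is the one I expect to be the main obstacle, and it is where the finiteness above is essential. On $\widehat L$ the bracket-length filtration is complete and Hausdorff, and since the quotients $\mathbb L(V)/\mathbb L^{\ge n}(V)$ are of finite type the tower of their homologies is automatically Mittag--Leffler, so the sequence converges strongly to $H(\widehat L)$. On $L$, whose filtration is exhaustive and Hausdorff but not complete, the $E^\infty$-page still computes the associated graded of the image-filtration on $H(L)$; what has to be verified separately is that this filtration is Hausdorff and that every class of $H(\widehat L)$ has a representative of bounded bracket-length. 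Both are established, as in the proof of Proposition \ref{Prop1}, by a stabilizing-tower argument: because in each internal degree bracket-length and weight differ by at most that degree, the affine spaces of ``primitives modulo $\mathbb L^{\ge n}(V)$'' that one must control are finite-dimensional, so the relevant towers stabilize and produce the required bounded-bracket-length cycles. Since $E^\infty(L)\cong E^\infty(\widehat L)$ is then the associated graded of a filtration-preserving map between Hausdorff, exhaustively filtered graded vector spaces, that map is an isomorphism. For contrast, minimality is indispensable: in the dgl of Remark \ref{rem:simple} the linear part of the differential does not lower weight, the degree estimate fails, and the inclusion into the completion is not a quasi-isomorphism.
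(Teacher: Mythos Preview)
Your route is genuinely different from the paper's. The paper never compares the two bracket-length spectral sequences; instead it exhibits a common third object, the complex $(s^{-1}Z^\#,0)$, and shows that the inclusions $s^{-1}Z^\#\hookrightarrow L$ and $s^{-1}Z^\#\hookrightarrow \widehat L$ are each quasi-isomorphisms. For $L$ this is done by first treating the case $d=0$ (a degree-tripling trick reduces to the simply-connected range, where it is classical) and then eliminating the linear part $D_1$ by an explicit descent on bracket-length. For $\widehat L$ the paper identifies $\widehat L$ with the linear dual of Sinha--Walter's Lie-coalgebra functor $\mathcal E(\land Z,d)$ and argues on that side, using a multi-index ordering induced by the Sullivan filtration of $Z$. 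No convergence questions arise.

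Your setup is correct through the identification of the $E^0$-pages and the strong convergence on the $\widehat L$ side, but the convergence argument for $L$ has a genuine gap. You invoke ``a stabilizing-tower argument as in the proof of Proposition~\ref{Prop1}'', but that proof uses the \emph{hypothesis} $H(L)\cong H(\widehat L)$: the tower there produces an element $b\in\widehat L$ with $Db=a$, and only the assumed injectivity of $\iota_*$ then yields $[a]=0$ in $H(L)$. Here that injectivity is precisely what you are proving, so the appeal is circular. Concretely, your finite-type observation gives Mittag--Leffler for the tower $\{L/\mathbb L^{\ge n}\}$ and hence $H(\widehat L)=\varprojlim H(L/\mathbb L^{\ge n})$, but it does not show that a cycle in $\widehat L$ is homologous to one of bounded bracket-length, nor that a cycle of $L$ bounding in $\widehat L$ already bounds in $L$. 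For a decreasing filtration that is Hausdorff but not complete there is no general convergence theorem; these two statements are exactly the missing input, and the weight/bracket-length bound alone does not supply them.

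A clean repair inside your own framework is to use the \emph{increasing} weight filtration $W_k=\{\text{weight}\le k\}$ that you already introduced: since $D_2$ preserves weight and $D_1$ lowers it, each $W_k$ is a finite-type subcomplex, $L=\bigcup_k W_k$, and the spectral sequence converges strongly to $H(L)$ automatically. Its $E^0$-differential is $D_2$, so $E^1=H\bigl(\mathcal L((\land Z,0)^\#)\bigr)=s^{-1}Z^\#$ by the $d=0$ case, and the sequence collapses. But note that this is essentially the paper's Lemma~\ref{Lema2}, and one still needs an independent computation of $H(\widehat L)$ --- which is where the paper brings in the dual Lie-coalgebra description.
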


\begin{proof} 
The elements of $s^{-1}Z^{\#}$ being cycles in $(L,d)$ and in $(\widehat{L},d)$, we get 
the following commutative diagram of complexes, in which the horizontal arrow is a morphism of dgl's,
$$\xymatrix{
{\cL}((\land Z,d)^\#)=(L,d)\ar[rr]
&&
(\widehat{L},d)\\
&
(s^{-1}Z^{\#},0).
\ar@{_{(}->}[lu] \ar@{^(->}[ru]
}$$
The proposition  is now a direct consequence of the next three lemmas.\end{proof}

\begin{lemma}\label{Lema2}
 The injection $( s^{-1}Z^\#,0)  \to {\mathcal L}(\land Z,d)^\#$ is a quasi-isomorphism.
 \end{lemma}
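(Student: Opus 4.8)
The plan is to filter $\mathcal{L}(C)$, where I write $C=(\land Z,d)^\#$ and $\mathcal{L}(C)=(\mathbb L(s^{-1}\overline C),d_1+d_2)$, by the word‑length grading of $\land Z$, and then to reduce the computation to the case in which the coalgebra differential vanishes.

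I would equip $\mathbb L(s^{-1}\overline C)$ with the \emph{weight} grading that puts $s^{-1}(\land^pZ)^\#$ in weight $p$ and adds weights under brackets. Two observations drive the argument. First, the weight‑$1$ part of $\mathbb L(s^{-1}\overline C)$ is exactly $s^{-1}(\land^1Z)^\#=s^{-1}Z^\#$ (a bracket has weight $\ge2$); on it $d_1$ vanishes because minimality of $(\land Z,d)$ forces $d$ to strictly raise word‑length and hence $\delta$ to kill $(\land^1Z)^\#$, and $d_2$ vanishes because the reduced coproduct of an element of $(\land^1Z)^\#$ is zero (a product of two elements of positive weight has weight $\ge2$). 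Thus $(s^{-1}Z^\#,0)$ is precisely the weight‑$1$ subcomplex, and the map in the statement is a well defined chain map. Second, for the same reasons $d_1$ strictly lowers weight while $d_2$ preserves it, so the increasing, exhaustive filtration of $\mathbb L(s^{-1}\overline C)$ by ``weight $\le p$'' is bounded below and has degreewise finite‑dimensional stages (here I use that $Z=Z^{\ge1}$ is of finite type). The associated spectral sequence converges, and its $E^0$‑term is $(\mathbb L(s^{-1}\overline C),d_2)=\mathcal{L}(C_0)$, where $C_0:=((\land Z)^\#,0)$.

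Next I would compute $H(\mathcal{L}(C_0))$. Since $\land Z$ is the free graded commutative algebra on the finite type space $Z$, the coalgebra $C_0$ is the cofree conilpotent graded cocommutative coalgebra on $Z^\#$; equivalently, $C_0=\mathcal{C}(s^{-1}Z^\#,0)$ is the Chevalley--Eilenberg coalgebra of the abelian Lie algebra on $s^{-1}Z^\#$ (its differential is zero because both the bracket and the internal differential are). Hence $\mathcal{L}(C_0)=\mathcal{L}\mathcal{C}(s^{-1}Z^\#,0)$, and the adjunction counit $\varepsilon\colon\mathcal{L}\mathcal{C}(s^{-1}Z^\#,0)\to(s^{-1}Z^\#,0)$ is a quasi‑isomorphism. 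Since $\varepsilon$ restricts to the identity on the weight‑$1$ generators and annihilates the generators of higher weight, it splits the inclusion $(s^{-1}Z^\#,0)\hookrightarrow\mathcal{L}(C_0)$; therefore that inclusion is a quasi‑isomorphism, and $H(\mathcal{L}(C_0))$ is concentrated in weight $1$, where it equals $s^{-1}Z^\#$.

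Consequently the $E^1$‑page of the spectral sequence is concentrated in weight $1$, all higher differentials vanish, and $E^\infty=E^1$. Thus $H(\mathcal{L}(C))$ is concentrated in filtration weight $1$, so it coincides with the image of $H$ of the weight‑$1$ subcomplex $(s^{-1}Z^\#,0)$ — that is, with the image of the map induced by our inclusion. This map is therefore surjective, and since its source and target have the same (finite) dimension in each degree, it is an isomorphism. The real content of the proof is the identification $H(\mathcal{L}(C_0))\cong s^{-1}Z^\#$; everything else is the bookkeeping that propagates it from the associated graded to $\mathcal{L}(C)$ itself. The point I expect to require the most care is ensuring the weight filtration is tame enough — exhaustive, bounded below, degreewise finite‑dimensional — for the spectral sequence to converge, which is exactly where the finite type hypothesis is used.
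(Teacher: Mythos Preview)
Your argument is correct and shares the paper's two-step skeleton --- first settle the case $d=0$, then reduce the general case to it by a filtration --- but executes both steps differently. For $d=0$ you identify $(\land Z)^\#\cong\mathcal{C}(s^{-1}Z^\#,0)$ (using the finite-type Hopf-algebra isomorphism $(\land Z)^\#\cong\land(Z^\#)$ over $\Q$) and invoke the Neisendorfer counit $\mathcal{L}\mathcal{C}(L)\to L$ directly; the paper instead triples the degrees of $Z$ so that the ungraded-isomorphic algebra $\land Z'$ is simply connected, and then cites \cite{Neis,Tan} in that range. For the reduction you filter by $\land Z$-word-length (so $d_1$ strictly lowers weight, $d_2$ preserves it, and the associated graded is $\mathcal{L}(C_0)$), whereas the paper filters by bracket length in $\mathbb{L}(W)$ (so $d_1$ preserves, $d_2$ raises) and peels off the top bracket-length component of a $D$-cycle by hand. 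Both reductions feed on the same input $H(\mathcal{L}(C_0))=s^{-1}Z^\#$; yours packages the induction as a spectral-sequence collapse at $E^1$, the paper's is a direct descent that sidesteps any convergence discussion. Your convergence worry is harmless here: the filtration is bounded below and exhaustive, and once $E^1$ is concentrated in a single filtration degree you get that $F_1\hookrightarrow F_p$ is a quasi-isomorphism for every $p$, so $F_1\hookrightarrow\varinjlim F_p=\mathcal{L}(C)$ is one too, without needing degreewise boundedness of the filtration.
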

 
 \begin{proof}
Begin with the particular case $(\land Z,0)$.
We associate to $\land Z$ the algebra $\land Z'$ where the degrees of $Z$ have been multiplied by $3$,
i.e., $Z^n \cong Z'^{3n}$. 
Therefore $\land Z$ is isomorphic to $\land Z'$ as an ungraded algebra and $Z' = (Z')^{\geq 3}$. 
Since $(\land Z',0)$ is finite type and simply connected,  the injection
$$(s^{-1}(Z')^\#,0)\to {\mathcal L}(\land Z',0)^\#$$ is a quasi-isomorphism (\cite[Proposition 4.2(a)]{Neis}, 
\cite[Proposition 1.3.9]{Tan}). 
As $(\land Z,0)$ and $(\land Z',0)$ are isomorphic differential algebras, 
${\mathcal L}(\land Z,0)^\#$ and ${\mathcal L}(\land Z',0)^\#$ are isomorphic as differential Lie algebras. 
Therefore, the injection
\begin{equation}\label{1}
 (s^{-1}Z^\#,0)\to {\mathcal L}(\land Z,0)^\#
 \end{equation}
 is also a quasi-isomorphism.

\medskip
Now in the general case, $({\mathbb L}(W),D):={\mathcal L}(\land Z,d)^\#$, the differential $D$ is the sum 
$d_1+d_2$ where $d_2$ is the differential of ${\mathcal L}(\land Z,0)^\#$ and $d_1$ is the desuspension of the dual of $d$. 
 From (\ref{1}), we know that each $d_2$-cycle in ${\mathbb L}^r(W)$  is a $d_2$-boundary if $r>1$ 
 and is an element of $s^{-1}Z^\#$ if $r= 1$.  
 Let $x = \sum_{i= 1}^rx_i$ be a cycle in $(\mathbb L(W), D)$, with $x_i \in \mathbb L^i(W)$. Since $d_2x_r= 0$, 
 there is an element $y$ such that $x_r = d_2y$. 
 Replacing $x$ by $x- Dy$ we can suppose recursively that $r= 1$. 
 The result follows. 
\end{proof}

The Quillen adjunction $({\mathcal L}, \cC)$ has been dualized 
by Sinha and Walter (\cite{SW})  in an adjunction between $\catcdga$  and $\catdglco$
 (the category of differential graded Lie coalgebras, dglc's for short).
Recall that a graded Lie coalgebra is a graded vector space $E$ together with
 a comultiplication $\Delta\colon E\to E\otimes E$ that satisfies two properties,
$$(1+\tau)\circ \Delta = 0\quad \mbox{and} \quad
(1+\sigma + \sigma^2)\circ (1\otimes \Delta)\circ \Delta= 0.$$ 
Here, $\tau {\colon} E\otimes E\to E\otimes E$ is the graded permutation and 
$\sigma{\colon} E\otimes E\otimes E\to E\otimes E\otimes E$ is the graded cyclic permutation (\cite{Mi}).

Any graded coalgebra $(C, \Delta)$ admits a graded Lie coalgebra structure defined by $\Delta_L = \Delta-\tau \circ\Delta$.
If $T^c(V)$ denotes the tensor coalgebra on a graded vector space $V$, then $\Delta_L$ induces a Lie coalgebra structure
on the indecomposables for the shuffle product, i.e., on the quotient of $T^c(V)$ by the set 
of decomposable elements for the shuffle products.
This quotient is called the {\em free Lie coalgebra on $V$}  and  denoted by $\Lc(V)$.
The adjunction can be written as 
$$
\xymatrix{
\catcdga\ar@/_{}/[rr]_{\mathcal E}
&&
 \catdglco.   \ar@/_/[ll]_{\mathcal A} }$$
If $(E,d)$ is a differential graded Lie coalgebra, then, ${\mathcal A}(E,d)= (\land sE,D)$ with
$$D(sx) = \frac{1}{2} \sum_i (-1)^{\vert x_i\vert} \,\, sx_i\land sx_i' - sdx,$$
if $\Delta x= \sum_i x_i\otimes x_i'$. 
On the other hand, for an augmented cdga $(A,d)$, the dglc ${\mathcal E}(A,d)$ is the quotient of the bar construction 
on $(A,d)$ by the set 
of decomposable elements for the shuffle product.

 \begin{lemma}
There is an isomorphism of differential graded Lie algebras
 $$(\widehat{L},d) \cong \left( {\mathcal E}(\land Z,d)\right)^\#\,.$$
\end{lemma}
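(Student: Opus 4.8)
The plan is to realize both sides from the single finite-type graded vector space underlying Quillen's construction, and then to match differentials. Set $C := (\land Z, d)^\#$; then $L = \mathcal{L}(C) = (\mathbb{L}(W), d_1 + d_2)$ with $W := s^{-1}\overline{C}$ finite type, and $L$ is the sub-dgl of primitives, for the shuffle (Hopf) coproduct, inside the cobar construction $\Omega(C) = (T(W), d_1 + d_2)$; consequently $\widehat{L} = \widehat{\mathcal{L}}(C)$ is the dgl of primitives inside the completed cobar construction $\widehat{\Omega}(C) = (\widehat{T}(W), d_1 + d_2)$. On the other side, by the definition recalled in the paper, $\mathcal{E}(\land Z, d)$ has underlying graded Lie coalgebra $\Lc(W^\#)$ --- the quotient of the tensor coalgebra on the generating space $W^\#$ of the bar construction of $\land Z$ by the shuffle-decomposable elements --- with differential induced from the bar differential $\partial_0 + \partial_1$, where $\partial_0$ is the internal differential of $\land Z$ and $\partial_1$ is built from the multiplication of $\land Z$.

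First I would establish the underlying identification $\Lc(W^\#)^\# \cong \widehat{\mathbb{L}}(W)$ of graded Lie algebras. The graded dual of $T^c(W^\#) = \bigoplus_k (W^\#)^{\otimes k}$ is $\prod_k W^{\otimes k} = \widehat{T}(W)$, and under this pairing the deconcatenation coproduct of $T^c(W^\#)$ dualizes to the concatenation product of $\widehat{T}(W)$ while the shuffle product of $T^c(W^\#)$ dualizes to the coshuffle (unshuffle) coproduct $\delta$ of $\widehat{T}(W)$. The annihilator of the shuffle-decomposables is exactly the set of $\delta$-primitive elements; since $\delta$ preserves word length, these are the elements all of whose word-length-$k$ components are primitive for the coshuffle coproduct on $W^{\otimes k}$, i.e.\ lie in $\mathbb{L}^k(W)$. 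Hence $\Lc(W^\#)^\# = \prod_k \mathbb{L}^k(W) = \widehat{\mathbb{L}}(W)$, the word-length filtration being dual to the lower central series filtration. Moreover the reduced cobracket $\Delta - \tau\Delta$ defining the Lie coalgebra structure on $\Lc(W^\#)$ dualizes to the commutator bracket on $\widehat{\mathbb{L}}(W)$; so this is an isomorphism of graded Lie algebras, and $\widehat{\mathbb{L}}(W)$ is the graded Lie algebra underlying $\widehat{L}$.

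Next I would check that the differential transported to $\widehat{\mathbb{L}}(W)$ from the bar differential of $\land Z$ is exactly $d = d_1 + d_2$. The dual of $\partial_0$ is the desuspension of the differential of $C$, i.e.\ Quillen's linear part $d_1$; the dual of $\partial_1$ (coming from the product of $\land Z$) is dual to the reduced coproduct of $C$ and, read through the primitives/commutator identification above, is the quadratic part $d_2(s^{-1}a) = -\sum_i(-1)^{\vert a_i\vert}[s^{-1}a_i, s^{-1}a_i']$; this is the linear-dual avatar of the compatibility between Quillen's adjunction $(\mathcal{L}, \cC)$ and the Sinha--Walter adjunction $(\mathcal{A}, \mathcal{E})$ of \cite{SW}. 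Since both differentials are derivations of $\widehat{\mathbb{L}}(W)$ agreeing on the generating space $W$ (on which the bar differential reduces to $\partial_0$), they coincide, and the identification is one of differential graded Lie algebras, which is the assertion.

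The step I expect to be the main obstacle is controlling this duality in the presence of completions: none of the bar construction of $\land Z$, of $\mathcal{E}(\land Z, d)$, of $\mathcal{L}(C)$ or of $\widehat{L}$ is of finite type --- already not in degree $0$ once $Z^1 \neq 0$ --- so the vector-space step must be organized via locally conilpotent coalgebras dual to pronilpotent Lie algebras, matching the word-length filtration on the tensor coalgebra against the lower central series on $\widehat{\mathbb{L}}(W)$; it is precisely this colimit-versus-limit asymmetry that replaces $\mathcal{L}(C)$ by its completion $\widehat{\mathcal{L}}(C)$ upon dualizing $\mathcal{E}(\land Z, d)$. The sign bookkeeping in the differential comparison (the factors $(-1)^{\vert a_i\vert}$, and the shuffle signs in $\delta$) is routine but must be done with care.
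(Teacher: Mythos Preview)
Your argument is correct and is essentially the same as the paper's: both identify $\mathcal{E}(\land Z,d)$ with the word-length graded free Lie coalgebra on $s^{-1}\overline{\land Z}$, observe that in each degree this is a direct sum of finite-dimensional pieces (since $Z$ is finite type and nonnegatively graded), and dualize to obtain the product $\prod_q \mathbb{L}^q(W)=\widehat{\mathbb{L}}(W)$. The paper's proof is a two-line degree count that leaves the Lie bracket and differential compatibility implicit, whereas you spell these out via the primitives/indecomposables duality for the shuffle Hopf structure on $T^c(W^\#)$ and the standard bar/cobar duality between $(\partial_0,\partial_1)$ and $(d_1,d_2)$; this extra care is warranted but does not change the route.
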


\begin{proof}
We have
$$\left[{\mathcal E}(\land Z,d)\right]^n = 
\bigoplus_{\left\{(q,r)\vert q(r-1) = n\right\}} (\mathbb L^c)^q(s^{-1}(\land Z)^r).$$
Therefore,
\begin{eqnarray*}
 \left[ {\mathcal E}(\land Z,d)\right]^\#_n
 &=&
 \prod_{\left\{(q,r)\vert q(r-1) = n\right\}} \left[ (\mathbb L^c)^q(s^{-1}(\land Z)^r)\right]^\#
 \\
 &=&
 \prod_{\left\{(q,r)\vert q(r-1) = n\right\}} \mathbb L^q(s^{-1}((\land Z)^\#)_r) = \widehat{{\mathbb L}}((\land Z)^\#)_n\,.
\end{eqnarray*}
\end{proof}

\begin{lemma}
 The injection $( s^{-1}Z^\#,0) \to (\widehat{L},d) $ is a quasi-isomorphism.
 \end{lemma}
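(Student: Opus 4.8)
The plan is to exploit the previous lemma, which identifies $(\widehat L,d)$ with $\left(\mathcal E(\land Z,d)\right)^\#$. Since the linear dual over $\mathbb Q$ is exact, $H(\widehat L,d)_n\cong\left(H^n\mathcal E(\land Z,d)\right)^\#$ for every $n$, and the injection $(s^{-1}Z^\#,0)\to(\widehat L,d)$ of the statement is dual to the canonical projection $\rho\colon\mathcal E(\land Z,d)\to(s^{-1}Z,0)$, namely the composite of the projection of $\mathcal E(\land Z,d)$ onto its bar-length-one part $s^{-1}\overline{\land Z}$ with $s^{-1}$ of the projection $\overline{\land Z}\to Z$. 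Using $dZ\subset\land^{\geq 2}Z$, and hence $d(\land^{\geq 2}Z)\subset\land^{\geq 3}Z$, one checks at once that $\rho$ is a morphism of complexes. So it is enough to show that $\rho$ is a quasi-isomorphism, i.e. that $H^*\mathcal E(\land Z,d)\cong s^{-1}Z$, realised by $\rho$.

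To prove this I would filter $\mathcal E(\land Z,d)$ by word-length in $\land Z$ (the ``weight'': $s^{-1}\land^k Z$ has weight $k$). The cobar-type part $d_2$ of the differential of $\mathcal E$ preserves weight, while the part induced by $d$ raises it strictly, $d$ being decomposable. On the associated graded we are thus left with $\mathcal E(\land Z,0)$, split by weight, and its homology is computed from the case $d=0$ of Lemma \ref{Lema2}: that case gives $H\bigl(\mathcal L(\land Z,0)^\#\bigr)=s^{-1}Z^\#$, concentrated in weight one; dualising and splitting by weight (the weight-$n$ summand of $\mathbb L((\land Z)^\#)$ is finite dimensional in each degree and equals the weight-$n$ summand of $\widehat{\mathbb L}((\land Z)^\#)$) yields $H\mathcal E(\land Z,0)=s^{-1}Z$, concentrated in weight one. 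Hence the $E_1$-page is $s^{-1}Z$ sitting in the single filtration degree one, all higher differentials vanish for weight reasons, the spectral sequence collapses, and $E_\infty=s^{-1}Z$ is carried by $\rho$. Equivalently, in the language of homological perturbation: the weight $\geq 2$ part of $(\widehat{\mathbb L}((\land Z)^\#),d_2)$ is acyclic and carries a weightwise contracting homotopy, and since the perturbation $d_1$ (the desuspended dual of $d$) vanishes on $s^{-1}Z^\#$ by minimality of $(\land Z,d)$, the transferred differential on $s^{-1}Z^\#$ is zero and the transferred inclusion is $\rho^\#$.

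The delicate point, which I expect to be the main obstacle, is convergence: $\widehat{\mathbb L}$ is an inverse limit over bracket-length, so in the non simply connected case direct sums are replaced by direct products and the weight filtration on $\widehat L$ ceases to be complete; already in degree zero the chain complex is infinite dimensional while its homology, the Malcev Lie algebra of $\pi_1$, is finite dimensional. When $Z=Z^{\geq 2}$ this difficulty disappears: all weights occurring in a given degree are then bounded, the word-length filtration is bounded degreewise, and the argument goes through verbatim. In general one must either push the convergence through by hand, comparing the finite bracket-length truncations and passing to the limit, or---more in the spirit of this section---invoke the Sinha--Walter equivalence: the unit $\land Z\to\mathcal A\mathcal E(\land Z,d)$ is a quasi-isomorphism, the linear part of the differential of $\mathcal A\mathcal E(\land Z,d)=(\land s\mathcal E(\land Z,d),D)$ is the suspension of the differential of $\mathcal E(\land Z,d)$, so reducing to the minimal model gives a minimal Sullivan algebra on $s\,H\mathcal E(\land Z,d)$; by uniqueness of minimal models this is $(\land Z,d)$, whence $H\mathcal E(\land Z,d)\cong s^{-1}Z$, and unwinding the identification shows that this isomorphism is induced by $\rho$.
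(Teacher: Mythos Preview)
Your dualisation to the projection $\rho\colon\mathcal{E}(\land Z,d)\to(s^{-1}Z,0)$ and the strategy of treating the case $d=0$ first and then perturbing are exactly what the paper does, and you have put your finger on the real issue: when $Z^1\neq 0$, neither the weight filtration on $\mathcal{E}$ nor the HPL perturbation series is bounded in a fixed cohomological degree, so the collapse of the $E_1$-page does not by itself give convergence, and the perturbation lemma cannot be invoked without a local-nilpotence condition. Your third alternative does not close the gap either: that the unit $\land Z\to\mathcal{A}\mathcal{E}(\land Z,d)$ is a quasi-isomorphism for a non-simply-connected minimal Sullivan algebra is essentially the statement under discussion (Sinha--Walter work in the simply-connected range), so appealing to it is circular here.

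What is missing is a \emph{well-founded} refinement of the weight, and this is precisely the idea the paper supplies. Using the Sullivan filtration one writes $Z=\bigoplus_{s\geq 0}Z(s)$ with $d\,Z(s)\subset\land\bigl(\bigoplus_{t<s}Z(t)\bigr)$, and assigns to a monomial $u_1\land\cdots\land u_r$ with $u_i\in Z(n_i)$ the \emph{multi-index} $(\ldots,q_2,q_1,q_0)$, where $q_s=\#\{i:n_i=s\}$; this is extended additively to bar-words and by taking the maximum to sums. Under the lexicographic order these multi-indices are ordinals below $\omega^{\omega}$, hence well-ordered. The bar differential $d_2$ preserves the multi-index, while $d_1$ \emph{strictly decreases} it (a factor in $Z(s)$ is replaced by factors in $\bigoplus_{t<s}Z(t)$). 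The paper then argues directly: given a $D$-cycle $x\in\ker\rho$, one uses the $d_2$-acyclicity of $\ker\rho$ (from the $d=0$ case) to peel off bar-length components and produce a cohomologous cycle of strictly smaller multi-index; well-foundedness forces this process to terminate at $0$. In your language, choosing the $d_2$-contracting homotopy $h$ to preserve the multi-index (possible since the $d_2$-complex splits over multi-indices), the operator $hd_1+d_1h$ strictly decreases the multi-index, hence is locally nilpotent---and that is exactly the missing smallness hypothesis for your HPL argument.
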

 
 \begin{proof}
 It is equivalent to prove that the dual projection
 $$q\colon {\mathcal E}(\land Z,d) \to (s^{-1}Z,0)$$
 is a quasi-isomorphism.
 
 \medskip
 As in the proof of Lemma \ref{Lema2}, we begin with the particular case $(\land Z,0)$ and still consider the algebra
 $\land Z'$ with $Z^n\cong {Z'}^{3n}$, for which the injection 
 $(s^{-1}(Z')^\#,0) \hookrightarrow {\mathcal L}((\land Z',0)^\#)$
  is a quasi-isomorphism. Therefore,
 as $ \land Z$ and $\land Z'$ are isomorphic, 
 ${\mathcal E}(\land Z,0)$ and ${\mathcal E}(\land Z',0)$ are isomorphic and the  projection 
 $$q \colon {\mathcal E}(\land Z, 0) \to (s^{-1}Z,0)$$
is a quasi-isomorphism.
 
\medskip
In the general case, write $(\mathbb L^c(V),D)= {\mathcal E}(\land Z,d)$ with 
$D= d_1+d_2$ and observe that
$({\mathbb L}^c(V), d_2) = {\mathcal E}(\land Z,0)$. 
By definition of a Sullivan minimal model, there is a filtration $Z = \cup_n T(n)$, with $T(0)= \mbox{ker}\, d$,
and  $T(n) = Z\cap d^{-1}(\land T(n-1))$ for $n\geq 1$. 
 If $Z(n)$ is a supplement of $T(n-1)$ in $T(n)$, then we have
 $Z = \oplus_{n\geq 0} Z(n)$ and  each monomial of $\land Z$ can be written as
 $\omega= u_1\land \cdots \land u_r$ with $u_i\in    Z(n_i)$. 
 We define \emph{the multi-index of the monomial} $\omega$ as the sequence $(\dots ,0, q_n, q_{n-1},\dots , q_1, q_0)$,
  where $q_s$ denotes the number of elements $u_i\in Z(s)$.
  (Each sequence can be continued with a sequence of $0$ on the left.)
   We order these sequences with the lexicographic order:
$$(q_n, q_{n_1},\dots , q_0) > (\ell_n, \ell_{n-1},\dots , \ell_0)$$
if for some $r$, we have $q_p= \ell_p$ for $p>r$ and $q_r>\ell_r$.

In the bar construction on $(\land Z,d)$ and its quotient ${\mathcal E}(\land Z,d)$, we define the multi-index of an element 
$[\omega_1\vert \dots \vert \omega_r]$ as the sum of the multi-indices of the $\omega_i$. 
Finally the multi-index of a sum of monomials is the maximum of the multi-indices of the monomials.
By construction, the part $d_{2}$ of the differential $D$ preserves the multi-index and the differential
$d_{1}=s^{-1}d$ decreases the multi-index.

Let $x$ be a cycle in the kernel of the projection $q$. We write $x$ as a finite sum
$x = \sum_{i= 0}^N x_i$, with $x_i \in (\mathbb L^c)^{i+1}(V)$. 
From $d_2x_0= 0$ and the first part of the proof corresponding to $d=0$, we know the existence of
$y_{1}\in (\mathbb L^c)^2(V)$ such that  $x_0= d_2y_1$. 
Replacing $x $ by $x-Dy_1$ we can suppose that $x = \sum_{i=1}^N x_i$. 
Then $d_2x_1= 0$ and recursively we can suppose that $x= x_N$. 
The same argument gives $d_2x_N = 0$ and so $x_N= d_2y_{N+1}$.  
Therefore we may replace $x$ by $x'=d_1y_{N+1}$ and keep $[x]=[x']$. By the discussion above, 
we know that $y_{N+1}$ has the same multi-index than $x$ and that $d_1y_{N+1}$ has a lower multi-index. 
So by iterating this operation we get  a sequence of classes of higher length of words  and lower multi-indices. 
Therefore after some steps, we obtain an element $y_{q}$ equal to $0$, which shows that $x$ is a boundary. 
\end{proof}

\begin{proof}[Proof of \thmref{thm:main}]
Proposition \ref{Prop1} establishes one implication of the equivalence.
As for the second one, let $(\L(V),d)$ be a finite type pronilpotent graded Lie algebra, of completion
$(\widehat{\mathbb L}(V),d)$.
As it was already observed, we deduce from \cite{Neis} that $(\L(V),d)$ is quasi-isomorphic to
${\mathcal L}(\land Z,d)^\#$,
where $(\land Z,d)$ is a minimal Sullivan algebra. The result now follows from Proposition \ref{prop:LLhat}.
\end{proof}

\section{DGLie models for spaces}  \label{Sec3}  
 
 The association of a rational dgl to a space has a long history.
 In his pioneer work in 1969, Quillen associates to each simply-connected pointed topological space, $X$,  a 
dgl $\lambda(X)$,
whose homology is isomorphic to the rational homotopy Lie algebra of $X$, 
$H_*({\lambda}(X)) \cong \pi_*(\Omega X)\otimes \mathbb Q$ (\cite{Q}). 
This association defines a functor inducing  an equivalence of  categories between   
the homotopy category of rational 
simply connected pointed spaces, and   the homotopy category of  rational dgl's $(L,d)$ with $L= L_{\geq 1}$.

\medskip
Some time after,  in \cite{Su}, Sullivan establishes another equivalence of  categories, 
between the homotopy category of pointed rational nilpotent spaces with finite Betti numbers 
and the homotopy category of  rational augmented commutative differential graded algebras whose homologies are
connected and of finite type. The starting point of the construction is the simplicial cdga $A_{PL}^{\bullet}$
where 
$$A_{PL}^n = \land (t_0, \dots , t_n, dt_0, \dots dt_n)/ (\sum_{i=0}^n t_i-1, \sum_{i=0}^n dt_i)\,.$$
Here, the elements $t_{i}$ are of degree
0 and the $dt_{i}$'s of degree 1. 
For a simplicial set $ {K}$, the cdga $A_{PL}(K)$ is defined by  
$A_{PL}(K) = \mbox{Hom}_{\mbox{\scriptsize SSets}} ( {K}, A_{PL}^{\bullet})$. 
If $X$ is a topological space, we denote by $A_{PL}(X)$ the cdga associated to the simplicial set of singular simplices
of $X$. The minimal model $m_X$ of $X$ is the minimal model of $A_{PL}(X)$. 

When $X$ is simply connected, Majewski proves (\cite{Ma}) that 
the Quillen model ${\lambda}(X)$ is quasi-isomorphic to the dgl 
${\mathcal L}(m_X^\#)$.
 
When $X$ is nilpotent with finite Betti numbers,    $m_X$ is a finite type cdga, and Neisendorfer uses the dgl ${\mathcal L}(m_X^\#) $ as a Lie model for $X$.

\medskip
More recently, a new construction has been made from ideas of E. Getzler (\cite{Getz}) 
and R. Lawrence and D. Sullivan (\cite{LS}).   
 When $X$ is a connected finite simplicial complex, a transfer diagram has been built in the category of cdga's
$$
\xymatrix{
{\fracd}\colon
&
 \ar@(ul,dl)@<-5.5ex>[]_{\phi}
 &
  A_{PL}(X) \ar@<0.75ex>[r]^-{p}
  &
   {C^*(X),} \ar@<0.75ex>[l]^-{i} }
$$
where $C^*(X)$ denotes the simplicial cochain complex on $X$. This transfer diagram induces a differential $d$ on  ${\mathbb L}^c(s^{-1}C^*(X))$ turning ${\lasu}^c(X):= ({\mathbb L}^c(s^{-1}C^*(X)),d)$ into a differential graded Lie coalgebra equipped with a quasi-isomorphism
$$ I \colon  \overline{{\lasu}}^c(X)\to {\mathcal E}(A_{PL}(X))\,.$$
Here, $\overline{{\lasu}}^c(X) $ is the kernel of the map ${{\lasu}}^c(X)\to  {{\lasu}}^c(x_0)$ induced by the injection of a base point $x_0\in X$. (\cite{Getz}, \cite{BFMT}). 

\begin{definition}   The \emph{Sullivan Lie model} of $X$ is the dgl 
${\lasu}_X := ({\lasu}^c(X))^\# $, and its \emph{reduced Sullivan Lie model} is the    quotient 
$$\overline{{\lasu}}_X := {\lasu}_X/ {{\lasu}}_{(x_0)}.$$ 
\end{definition}

Since the functor ${\mathcal E}$ preserves quasi-isomorphisms, we get   quasi-isomorphisms
$$ \ov{\lasu}_X  \simeq ({\mathcal E}(m_X))^\#  \,.$$
From \thmref{thm:main}, we then deduce:

\begin{theoremb}\label{thm:neisendorfer}
 The Sullivan Lie model of  a connected finite simplicial complex $X$ is a generalization of the Neisendorfer model,
 i.e., if $X$ is nilpotent there are quasi-isomorphisms
 $$ \ov{\lasu}_X \simeq ({\mathcal E}(m_X))^\# \simeq \cL(m_{X}^{\#})
 .$$
 \end{theoremb}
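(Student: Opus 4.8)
The plan is to obtain this statement as a corollary of Proposition~\ref{prop:LLhat} (equivalently, of the sufficiency direction of \thmref{thm:main}) together with the lemma of Section~2 identifying the dual of ${\mathcal E}(\land Z,d)$ with a completed Quillen model, once we check that nilpotency puts us in the finite type setting where those results apply.

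First I would make the reduction to finite type explicit. A connected finite simplicial complex has finite Betti numbers, and since $X$ is moreover nilpotent, its minimal Sullivan model $m_X=(\land Z,d)$ has $Z=Z^{\geq 1}$ of finite type; this is exactly the hypothesis required in Proposition~\ref{prop:LLhat}. I would also recall that, following Neisendorfer (\cite{Neis}), the Neisendorfer model of $X$ is by definition the dgl ${\mathcal L}(m_X^\#)$, so the content of the statement is the displayed chain of quasi-isomorphisms. The first one, $\ov{\lasu}_X\simeq ({\mathcal E}(m_X))^\#$, is already assembled in the discussion preceding the theorem: the transfer diagram gives the quasi-isomorphism $I\colon \ov{\lasu}^c(X)\to {\mathcal E}(A_{PL}(X))$, the quasi-isomorphism $m_X\to A_{PL}(X)$ and the fact that ${\mathcal E}$ preserves quasi-isomorphisms give ${\mathcal E}(m_X)\simeq {\mathcal E}(A_{PL}(X))$, and linear dualization of finite type complexes preserves quasi-isomorphisms.

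For the remaining quasi-isomorphism $({\mathcal E}(m_X))^\#\simeq {\mathcal L}(m_X^\#)$, I would apply the lemma of Section~2 with $\land Z=m_X$: it gives an isomorphism of dgl's $({\mathcal E}(m_X))^\#\cong (\widehat{L},d)$, where $(L,d)={\mathcal L}(m_X^\#)$. Proposition~\ref{prop:LLhat} then says precisely that the inclusion ${\mathcal L}(m_X^\#)\hookrightarrow \widehat{{\mathcal L}(m_X^\#)}$ is a quasi-isomorphism; alternatively, one notes that $H({\mathcal L}(m_X^\#))$ is the homotopy Lie algebra of the nilpotent space $X$, hence finite type and pronilpotent, and invokes \thmref{thm:main}. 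Composing with the first quasi-isomorphism yields $\ov{\lasu}_X\simeq ({\mathcal E}(m_X))^\#\simeq {\mathcal L}(m_X^\#)$, which is the assertion.

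The genuine work is entirely contained in Proposition~\ref{prop:LLhat}/\thmref{thm:main}, which are already proven, so the only thing to be careful about here is the bookkeeping: checking that nilpotency is exactly what forces $m_X$ to be of finite type — so that completion, linear dualization and the functor ${\mathcal E}$ all behave as needed — and that the two reduced constructions are matched by duality without a spurious constant summand coming from the basepoint.
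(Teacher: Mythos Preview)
Your proposal is correct and follows essentially the same route as the paper: the first quasi-isomorphism $\ov{\lasu}_X \simeq ({\mathcal E}(m_X))^\#$ is exactly the content of the discussion preceding the theorem, and the second is obtained by identifying $({\mathcal E}(m_X))^\#$ with the completion of ${\mathcal L}(m_X^\#)$ and then invoking Proposition~\ref{prop:LLhat} (or equivalently \thmref{thm:main}). The paper states this as an immediate consequence in one line, while you have spelled out the bookkeeping more explicitly; one minor remark is that linear dualization over a field preserves quasi-isomorphisms without any finite type hypothesis, so that clause is unnecessary.
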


\medskip
We thank an anonymous referee for pointing out an error in a first version of \thmref{thm:main} and for suggesting
the example of \remref{rem:simple}.

\providecommand{\bysame}{\leavevmode\hbox to3em{\hrulefill}\thinspace}
\providecommand{\MR}{\relax\ifhmode\unskip\space\fi MR }
\providecommand{\MRhref}[2]{%
  \href{http://www.ams.org/mathscinet-getitem?mr=#1}{#2}
}
\providecommand{\href}[2]{#2}


\begin{thebibliography}{10}

\bibitem{BFMT}
Urtzi {Buijs}, Yves {F{\'e}lix}, Aniceto {Murillo}, and Daniel {Tanr{\'e}},
  \emph{{Lie models of simplicial sets and representability of the Quillen
  functor}}, ArXiv 1508.01442 (2015).

\bibitem{du}
Johan~L. Dupont, \emph{Curvature and characteristic classes}, Lecture Notes in
  Mathematics, Vol. 640, Springer-Verlag, Berlin-New York, 1978. \MR{0500997}

\bibitem{FHTII}
Yves F{\'e}lix, Steve Halperin, and Jean-Claude Thomas, \emph{Rational homotopy
  theory. {II}}, World Scientific Publishing Co. Pte. Ltd., Hackensack, NJ,
  2015. \MR{3379890}

\bibitem{Getz}
Ezra Getzler, \emph{Lie theory for nilpotent {$L_\infty$}-algebras}, Ann. of
  Math. (2) \textbf{170} (2009), no.~1, 271--301. \MR{2521116}

\bibitem{LS}
Ruth Lawrence and Dennis Sullivan, \emph{A formula for topology/deformations
  and its significance}, Fund. Math. \textbf{225} (2014), no.~1, 229--242.
  \MR{3205571}

\bibitem{Ma}
Martin Majewski, \emph{Rational homotopical models and uniqueness}, Mem. Amer.
  Math. Soc. \textbf{143} (2000), no.~682, xviii+149. \MR{1751423}

\bibitem{Mi}
Walter Michaelis, \emph{Lie coalgebras}, Adv. in Math. \textbf{38} (1980),
  no.~1, 1--54. \MR{594993}

\bibitem{Neis}
Joseph Neisendorfer, \emph{Lie algebras, coalgebras and rational homotopy
  theory for nilpotent spaces}, Pacific J. Math. \textbf{74} (1978), no.~2,
  429--460. \MR{494641}

\bibitem{Q}
Daniel~G. Quillen, \emph{Rational homotopy theory}, Ann. of Math. (2)
  \textbf{90} (1969), 205--295. \MR{0258031 (41 \#2678)}

\bibitem{SW}
Dev Sinha and Benjamin Walter, \emph{Lie coalgebras and rational homotopy
  theory, {I}: graph coalgebras}, Homology Homotopy Appl. \textbf{13} (2011),
  no.~2, 263--292. \MR{2861231}

\bibitem{Su}
Dennis Sullivan, \emph{Infinitesimal computations in topology}, Inst. Hautes
  \'Etudes Sci. Publ. Math. (1977), no.~47, 269--331 (1978). \MR{0646078 (58
  \#31119)}

\bibitem{Tan}
Daniel Tanr{\'e}, \emph{Homotopie rationnelle: mod\`eles de {C}hen, {Q}uillen,
  {S}ullivan}, Lecture Notes in Mathematics, vol. 1025, Springer-Verlag,
  Berlin, 1983. \MR{764769 (86b:55010)}

\end{thebibliography}
\end{document}